\documentclass{article}
\usepackage{graphicx} 
\usepackage[utf8]{inputenc}
\usepackage{amsmath, amssymb, amsthm}
\usepackage{geometry}
\usepackage{authblk}
\usepackage{tikz}
\usetikzlibrary{positioning, shapes.geometric, arrows.meta, calc}
\usepackage{hyperref}
\usepackage{enumitem}

\theoremstyle{plain}
\newtheorem{theorem}{Theorem}[section]
\newtheorem{lemma}[theorem]{Lemma}

\newtheorem{proposition}[theorem]{Proposition}
\newtheorem{corollary}[theorem]{Corollary}
\newtheorem{innerclaim}{Claim}

\theoremstyle{definition}

\theoremstyle{remark}

\usepackage{todonotes}
\setuptodonotes{inline}

\title{The Stable Marriage Polytope is Box-Totally Dual Integral}

\author[1]{Michele Barbato%
  \thanks{\href{mailto:michele.barbato@unimi.it}
  {\nolinkurl{michele.barbato@unimi.it}}}}
\author[2]{Emiliano Lancini%
  \thanks{\href{mailto:emiliano.lancini@lamsade.dauphine.fr}
  {\nolinkurl{emiliano.lancini@lamsade.dauphine.fr}}}}
\affil[1]{Department of Computer Science, Università degli Studi di Milano, Milano, Via Celoria 18, 20133, Italy}
\affil[2]{LAMSADE, CNRS UMR 7243,
Université Paris Dauphine--PSL, Paris, France}
\date{ }
\begin{document}
	
	\maketitle
	
	\begin{abstract}
	A stable matching in a bipartite preference system is a matching that has no unmatched pair whose endpoints both prefer each other to their current assignments. Rothblum gave a linear description of the stable marriage polytope, the convex hull of the incidence vectors of stable matchings. We prove that this polytope is box-totally dual integral. The proof uses the affine representation of the stable marriage polytope as an order polytope together with the matricial characterization of box-TDI polyhedra.
 Since Király and Pap proved that Rothblum’s description is totally dual integral, our result further implies that Rothblum’s system is box-totally dual integral.
	\end{abstract}
	
	\section{Introduction}

Given a bipartite graph whose vertices have strict preferences over their edges, the stable marriage problem asks for a matching that is stable: no edge outside the matching should have endpoints that both prefer this edge to their current assignment, with being unmatched regarded as worse than any acceptable partner. This problem was first introduced by Gale and Shapley \cite{GaleShapley1962}. Irving and Leather \cite{IrvingLeather1986} introduced the rotation-poset representation of stable matchings, which subsequently led to efficient algorithms for optimization over stable matchings \cite{IrvingLeatherGusfield1987}. On the polyhedral side, Vande Vate \cite{VandeVate1989} gave a linear description of the stable marriage polytope for complete balanced bipartite graphs. Rothblum \cite{Rothblum1992}, and later Roth, Rothblum, and Vande Vate \cite{RothRothblumVandeVate1993}, extended the polyhedral theory to general bipartite preference systems. Further investigations concerned the facial and linear-inequality structure of the stable marriage polytope \cite{AbeledoRothblum1994,Ratier1996}. More recent work studied its dimension and facets \cite{EirinakisMagosMourtosMiliotis2014} and established its affine relation with the order polytope of the rotation poset \cite{AprileCevallosFaenza2018}. The polyhedral approach was also extended beyond the one-to-one setting: Ba\"iou and Balinski \cite{BaiouBalinski2000} described the stable admissions polytope, while Fleiner  \cite{Fleiner2003} developed \(b\)-matching generalizations.

This work focuses on box-total dual integrality of the stable marriage polytope.
A linear system $Ax\le b$ is \emph{totally dual integral} (TDI) if for any integer cost function for which the optimum is finite, its dual admits an integer optimal solution. 
A system is \emph{box-totally dual integral} (box-TDI) if it remains TDI under the addition of any rational bound on the variables.  A \emph{box-TDI polyhedron} is a polyhedron that can be described by a box-TDI system. Chervet, Grappe, and Robert \cite{ChervetGrappeRobert2021} characterized box-TDI polyhedra in terms of matrices describing the affine hulls of their faces. For a survey of classical results and recent developments, we refer to Lancini and Pisanu \cite{LanciniPisanu2026}. Matching polytopes are among the most extensively studied with respect to box-TDIness, as proven, for example, by the results of Ding, Tan, and Zang~\cite{ding2018} and Grappe, Lacroix, and Pisanu~\cite{grappe2027}. 

\paragraph{Contributions.}
Kir\'aly and Pap \cite{KiralyPap2008} proved that Rothblum's description of the stable marriage polytope is TDI. In this paper, we strengthen this result by proving that the stable marriage polytope is box-TDI for every finite bipartite preference system with strict preferences, and therefore that Rothblum's system is box-TDI. 

Our proof uses the affine representation of the stable marriage polytope as the order polytope of the rotation poset, established by Aprile, Cevallos, and Faenza \cite{AprileCevallosFaenza2018}. We show that the direction space of every face has a totally unimodular basis, and deduce the box-TDIness of this polytope thanks to characterization of Chervet, Grappe, and Robert.

\section{Preliminaries and notation}\label{sec:preliminaries}
\subsection*{Stable matchings}
All graphs are finite, undirected, and simple. Let $G=(U,V;E)$ be a bipartite graph. For a vertex $v$, let $\delta(v)$ denote the set of its incident edges. A \emph{bipartite preference system} $(G,\mathcal O)$ consists of $G$ and a strict total order $<_v$ on $\delta(v)$ for each vertex $v$. We write $e<_v f$ when $v$ prefers $e$ to $f$, and $e\le_v f$ when $e<_v f$ or $e=f$.

An edge $e$ \emph{dominates} an edge $f$ if they share an endpoint $v$ and $e\le_v f$. Let $\varphi(f)$ denote the set of edges that dominate $f$. A matching $M$ is \emph{stable} if every edge is dominated by an edge of $M$. This treats being unmatched as worse than any incident edge. Every bipartite preference system admits a stable matching~\cite{GaleShapley1962}, and all its stable matchings cover the same set of vertices~\cite{RothRothblumVandeVate1993}.

The \emph{stable marriage polytope} $P_{G\mathcal O}$ is the convex hull of the incidence vectors of the stable matchings of $(G,\mathcal O)$. Rothblum~\cite{Rothblum1992} proved that it is described by
\begin{equation}\label{sys:rothblum}
\left\{\begin{array}{ll}
 x(\delta(v))\le 1 & (v\in U\cup V),\\
 x(\varphi(e))\ge 1 & (e\in E),\\
 x_e\ge 0 & (e\in E).
\end{array}\right.
\end{equation}

\subsection*{Box-total dual integrality}
A rational system $Ax\le b$ is \emph{totally dual integral} (TDI) if, for every integer objective vector for which the primal optimum is finite, the dual admits an integer optimal solution. It is \emph{box-totally dual integral} (box-TDI) if $Ax\le b,\ \ell\le x\le u$ is TDI for all rational lower and upper bounds, with absent bounds allowed. A polyhedron is \emph{box-TDI} if it admits a box-TDI description. Cook~\cite{Cook1986} proved that every TDI system describing a box-TDI polyhedron is box-TDI.

A matrix is \emph{totally unimodular} (TU) if every square subdeterminant belongs to $\{0,\pm1\}$. For a nonempty face $F$, let $\operatorname{lin}(F)$ denote the linear space parallel to $\operatorname{aff}(F)$.
Chervet, Grappe, and Robert~\cite{ChervetGrappeRobert2021} characterized box-TDI polyhedra in terms of matricial descriptions.
\begin{theorem}[Chervet, Grappe, and Robert~\cite{ChervetGrappeRobert2021}]\label{thm:boxTDIiffTU}
A rational polyhedron $P$ is box-TDI if and only if, for every nonempty face $F$ of $P$, there exists a totally unimodular matrix whose columns form a basis of $\operatorname{lin}(F)$.
\end{theorem}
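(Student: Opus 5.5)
Since the statement characterizes box-TDI polyhedra through their faces, I would argue both directions through normal cones. The basic tool is the classical reformulation of TDI. A system $Ax\le b$ is TDI if and only if, for every minimal face $F$ of the polyhedron, the rows of $A$ that are tight on $F$ form a Hilbert basis of the normal cone $N_F$. Now add bounds $\ell\le x\le u$. Every face of the resulting polyhedron $P\cap B$ has the form $F\cap B'$, where $F$ is a face of $P$ and $B'$ is a face of the box $B$. Its normal cone is $N_F+\operatorname{cone}\{\pm e_i : i\in I\}$ for some index set $I$ with prescribed signs. So box-TDIness of $P$ amounts to the following condition: for every face $F$ of $P$ and every signed coordinate set, the cone $N_F+\operatorname{cone}\{\sigma_i e_i : i\in I\}$ has an integral Hilbert basis consisting of integer normals of $P$ tight on $F$, together with the added unit vectors. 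I would first make this reformulation precise, using Cook's result that one may take a TDI description of $P$ and test box-TDIness on it.

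For the direction ($\Rightarrow$), fix a face $F$ and let $L=\operatorname{lin}(F)$. The lineality space of $N_F$ is $L^\perp$. Applying the condition above with all coordinates boxed at a point of the relative interior of $F$, I would deduce the following: for every $I$, the projection of the lattice $L^\perp\cap\mathbb Z^n$ onto the coordinates outside $I$ equals the lattice of integer points of the projected subspace. In other words, $L^\perp$ is \emph{unimodular in every coordinate projection}, or equimodular. I would then invoke the standard linear-algebra fact that a rational subspace $L$ has a TU basis if and only if every maximal nonsingular minor of a basis of $L^\perp$ (equivalently, of $L$) has the same absolute value. To carry this out, I would pick coordinates $J$ forming a basis of the column matroid, write $L$ as the column space of $\binom{I}{D}$, and show that every entry and every subdeterminant of $D$ lies in $\{0,\pm1\}$. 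Each such subdeterminant corresponds to a pivot, which must preserve integrality of the projected lattices.

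For the direction ($\Leftarrow$), I would build a box-TDI description directly. For each face $F$, take a TU matrix $M_F$ whose columns span $\operatorname{lin}(F)$. A TU complement then gives an integral basis of $\operatorname{lin}(F)^\perp$, so the integer vectors of $N_F$ together with any unit vectors generate a cone whose integer points are integral combinations of the generators. The key point is that adding unit vectors to a matrix of the form $[I\;D]$ with $D$ TU preserves total unimodularity, and TU systems yield Hilbert bases. I would take the system consisting of all integer Hilbert-basis elements of all normal cones. This system is finite, it describes $P$, and the previous observation shows it is TDI even after boxing.

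I expect the main obstacle to be the step in ($\Rightarrow$) that turns the Hilbert-basis property of every boxed normal cone into the equimodularity of $\operatorname{lin}(F)$. This requires choosing the box carefully, so that the tight box constraints are exactly the coordinates needed for each pivot. My first attempt would be to induct on $|I|$, projecting out one coordinate at a time.
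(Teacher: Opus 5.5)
The paper gives no proof of this statement. It is quoted from \cite{ChervetGrappeRobert2021}, and only the direction ``TU bases for all faces $\Rightarrow$ box-TDI'' is used later, in Theorem~\ref{thm:PboxTDI}. That is exactly the direction where your proposal has a genuine gap. In ($\Leftarrow$) you argue face by face. From a TU basis of $\operatorname{lin}(F)^\perp$ alone, you conclude that every integer point of $N_F+\operatorname{cone}\{\sigma_ie_i:i\in I\}$ is a nonnegative integer combination of integer vectors of $N_F$ and the $\sigma_ie_i$. This does not follow. A TU basis of the linear span of $N_F$ controls the lattice $\operatorname{lin}(F)^\perp\cap\mathbb Z^n$, not the cone $N_F$, which is cut out inside that span by inequalities coming from the faces containing $F$.

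For example, take $P=\{x\in\mathbb R^2: x_1+2x_2\le 0,\ 2x_1+3x_2\le 0\}$ and $F=\{0\}$. Then $\operatorname{lin}(F)^\perp=\mathbb R^2$ has the TU basis $I_2$, and $N_F=\operatorname{cone}\{(1,2),(2,3)\}$ has Hilbert basis $\{(1,2),(2,3)\}$. Yet $(1,1)=\tfrac12(1,2)+\tfrac12e_1$ lies in $N_F+\operatorname{cone}\{e_1\}$ and is not a nonnegative integer combination of $(1,2),(2,3),e_1$. This $P$ is indeed not box-TDI, but the reason is that its edges are the rays spanned by $(-2,1)$ and $(3,-2)$, which your argument at the vertex never inspects. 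Your ``key point'' does not rescue this either. The Hilbert basis of $N_F$ together with unit vectors is usually not the row set of a TU matrix, even when $P$ is box-TDI. At the vertex $(2,0)$ of $\operatorname{conv}\{(0,0),(1,1),(2,0),(1,-1)\}$, for instance, $N_F=\operatorname{cone}\{(1,1),(1,-1)\}$, and these two rows have determinant $-2$.

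The missing idea is to use the hypothesis on every face $F'\supseteq F$; the normal cones of these faces are exactly the faces of $N_F$. One way to do this is as follows.
\begin{enumerate}
\item Fix an integer $c\in N_F+\operatorname{cone}\{\sigma_ie_i:i\in I\}$, and let $\lambda^*$ be a vertex of the nonempty pointed polyhedron $\{\lambda\in\mathbb R^I_{\ge0}: c-\sum_i\lambda_i\sigma_ie_i\in N_F\}$.
\item Put $y^*=c-\sum_i\lambda^*_i\sigma_ie_i$, and let $F'$ be the face such that $y^*$ lies in the relative interior of $N_{F'}$.
\item Since $N_{F'}$ spans $\operatorname{lin}(F')^\perp$, being a vertex forces the restriction of $\lambda^*$ to its support $K$ to be the unique solution of $M^\top\bigl(c-\sum_{i\in K}\lambda_i\sigma_ie_i\bigr)=0$. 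Here $M$ is a TU matrix whose columns form a basis of $\operatorname{lin}(F')$.
\item This is a full-column-rank TU system with integral right-hand side, so $\lambda^*$ and $y^*$ are integral. That is the Hilbert basis property you need.
\end{enumerate}

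There are also two smaller points about ($\Rightarrow$). First, the lineality space of $N_F$ is $\operatorname{lin}(P)^\perp$; it is the linear span of $N_F$ that equals $\operatorname{lin}(F)^\perp$. With that correction, your lattice condition is right. Second, no induction on $|I|$ is needed. Take a lattice basis $A$ of $\operatorname{lin}(F)^\perp\cap\mathbb Z^n$ and a nonsingular maximal column submatrix $A_J$. Fix the coordinates outside $J$, with equal lower and upper bounds, at a rational point of the relative interior of $F$. The tight rows generate $\operatorname{lin}(F)^\perp\cap\mathbb Z^n$ as a group, and their projections onto $J$ form a Hilbert basis of a full-dimensional cone in $\mathbb R^J$. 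Hence the rows of $A_J$ generate $\mathbb Z^J$, so $|\det A_J|=1$. Therefore $A$ is equimodular, and pivoting gives a TU basis of $\operatorname{lin}(F)^\perp$, hence of $\operatorname{lin}(F)$.
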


Projections onto coordinate subspaces preserve both TDIness and box-TDIness.
\begin{proposition}[Cook~\cite{Cook1986}]\label{prp:projections}
Every projection of a box-TDI polyhedron onto a coordinate subspace is box-TDI.
\end{proposition}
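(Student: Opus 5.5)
We prove Proposition~\ref{prp:projections} with the matricial characterization of Theorem~\ref{thm:boxTDIiffTU}. Let $P\subseteq\mathbb{R}^{n}$ be a box-TDI polyhedron. Order the coordinates as $x=(y,z)$, and let $\pi(y,z)=y$ be the projection onto the coordinate subspace of the $y$-variables. The image $Q=\pi(P)$ is a rational polyhedron, so by Theorem~\ref{thm:boxTDIiffTU} it suffices to show the following: for every nonempty face $G$ of $Q$, $\operatorname{lin}(G)$ has a basis forming the columns of a TU matrix.

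First we relate faces of $Q$ to faces of $P$. Let $G$ be a nonempty face of $Q$ and pick $c$ with $G=\arg\max\{c^\top y: y\in Q\}$. Set $F=\arg\max\{(c,0)^\top x: x\in P\}$. This is a nonempty face of $P$, and since $(c,0)^\top(y,z)=c^\top y$ we get $\pi(F)=G$. Because $\pi$ is linear, $\pi(\operatorname{aff}F)=\operatorname{aff}\pi(F)$, and hence $\operatorname{lin}(G)=\pi(\operatorname{lin}(F))$. (If $G=Q$, take $c=0$.)

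Next we transfer the TU basis. By Theorem~\ref{thm:boxTDIiffTU}, there is a TU matrix $M$ whose columns form a basis of $\operatorname{lin}(F)$. Write $M=\binom{M_y}{M_z}$ according to the coordinate split. The columns of $M_y$ span $\pi(\operatorname{lin}(F))=\operatorname{lin}(G)$, but they need not be independent. We therefore select a maximal linearly independent set of columns of $M_y$. The corresponding column submatrix $M'_y$ has columns forming a basis of $\operatorname{lin}(G)$. Moreover, $M'_y$ is a submatrix of $M$ obtained by deleting rows and columns, and total unimodularity is inherited by submatrices, since every square subdeterminant of $M'_y$ is one of $M$. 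So $M'_y$ is TU, and Theorem~\ref{thm:boxTDIiffTU} gives that $Q$ is box-TDI.

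The only delicate point is the face correspondence: every face of $Q$ must be the projection of some face of $P$, with the linear spaces matching. This is handled by lifting the objective with zero coefficients on the projected-out coordinates, as above. When $\operatorname{lin}(G)=\{0\}$, the empty matrix serves as the basis and is trivially TU. Rationality of $Q$ follows from Fourier–Motzkin elimination.
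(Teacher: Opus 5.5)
Your proof is correct, and it takes a different route from the paper, which gives no argument and simply cites Cook for this statement.

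\textbf{The classical route.} Cook's approach works with objective vectors rather than with faces. It rests on his characterization: a rational polyhedron $P$ is box-TDI if and only if every rational $c$ admits an integral $\tilde c$ with $\lfloor c\rfloor\le\tilde c\le\lceil c\rceil$ such that every optimal solution of $\max\{c^\top x: x\in P\}$ is also optimal for $\tilde c$. With this in hand, the projection statement takes two lines. Lift $c$ to $(c,0)$. The rounding bounds force $\tilde c$ to vanish on the projected-out coordinates, so $\tilde c$ descends to an admissible rounding of $c$ over $Q$.

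\textbf{Your route.} You use the same device of padding the objective with zeros, but only to set up the face correspondence $\pi(F)=G$ and $\operatorname{lin}(G)=\pi(\operatorname{lin}(F))$. You then transfer the TU basis by deleting the $z$-rows and discarding dependent columns. All of these steps check out, including the nonemptiness of $F$, the affine-hull identity, and the degenerate case $\operatorname{lin}(G)=\{0\}$.

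\textbf{Comparison.} Your argument has two advantages.
\begin{itemize}
\item It depends only on Theorem~\ref{thm:boxTDIiffTU}, which the paper already needs for Theorem~\ref{thm:PboxTDI}, plus two elementary facts: linear maps commute with affine hulls, and total unimodularity is inherited by submatrices.
\item It produces an explicit TU basis for each face of the projection, as a submatrix of a TU basis for the lifted face.
\end{itemize}
The cost is that all the depth sits inside the Chervet--Grappe--Robert characterization, a later and more involved theorem than Cook's. So your proof is more economical within this paper, not more elementary.
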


In the proof of Theorem~\ref{thm:PboxTDI} we will use a classical sufficient condition for total unimodularity.
\begin{proposition}[Heller--Tompkins~\cite{HellerTompkins1956}]\label{prp:TUsuff}
A $\{0,\pm1\}$-matrix with at most one $+1$ and at most one $-1$ in each row is totally unimodular.
\end{proposition}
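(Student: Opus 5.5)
We prove the statement by induction on the order $k$ of a square submatrix. The key observation is that the hypothesis is inherited by submatrices. If a matrix $A$ has at most one $+1$ and at most one $-1$ in each row, then so does every submatrix obtained by deleting rows and columns, since deleting entries cannot create new nonzeros. It therefore suffices to show that every \emph{square} $\{0,\pm1\}$-matrix $B$ of order $k$ with this row property has $\det B\in\{0,\pm1\}$.

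The base case $k=1$ is immediate, because the single entry of $B$ lies in $\{0,\pm1\}$. For $k\ge 2$ we assume the claim for all orders smaller than $k$ and split into three exhaustive cases according to the rows of $B$.
\begin{enumerate}[label=(\roman*)]
\item Some row of $B$ is identically zero. Then $\det B=0$.
\item Some row of $B$ has exactly one nonzero entry $\pm1$. Laplace expansion along that row gives $\det B=\pm\det B'$, where $B'$ is a square submatrix of order $k-1$. The matrix $B'$ again has the row property, so $\det B'\in\{0,\pm1\}$ by the induction hypothesis, and hence so is $\det B$.
\item Every row of $B$ has exactly two nonzero entries. By hypothesis these must be one $+1$ and one $-1$, so every row sums to zero. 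Thus $B\mathbf 1=0$, the columns of $B$ are linearly dependent, and $\det B=0$.
\end{enumerate}
These cases cover all possibilities, because the row property allows at most two nonzero entries per row. This completes the induction.

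There is no real obstacle here. The only point needing care is case (iii): we must observe that two nonzero entries in a row are forced to have opposite signs, which is exactly what makes the all-ones vector lie in the kernel of $B$. One could alternatively note that $A^{\top}$ is a submatrix of the node--arc incidence matrix of a directed graph, possibly with some columns having a single nonzero entry, and invoke the standard total unimodularity of such matrices. The direct determinant argument above, however, is self-contained and is the one we follow.
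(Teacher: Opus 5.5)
The paper gives no proof of this proposition; it quotes it from Heller and Tompkins. Their theorem is more general. A $\{0,\pm1\}$-matrix with at most two nonzeros per row is totally unimodular whenever its columns split into two classes such that, in each row with two nonzeros, entries of equal sign lie in different classes and entries of opposite sign lie in the same class. The statement here is the case where every column is placed in one class.

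Your determinant induction is correct and complete:
\begin{itemize}
\item the row property passes to submatrices;
\item a zero row gives determinant $0$;
\item a row with a single nonzero reduces the order by one through Laplace expansion;
\item when every row has exactly two nonzeros, the hypothesis forces them to be one $+1$ and one $-1$, so $B\mathbf 1=0$ and $\det B=0$.
\end{itemize}
Together with the base case $k=1$, these cases are exhaustive.

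This is the standard textbook argument, the same one used for incidence matrices of directed graphs. Compared with the citation, your route gives a short, self-contained justification of exactly the special case the paper needs for the rows of $Z_F$ in Theorem~\ref{thm:PboxTDI}. The cited Heller--Tompkins theorem covers the broader bipartition setting, which the paper never uses.
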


\section{Box-TDIness of the stable marriage polytope}\label{sec:main}
We first prove box-TDIness for complete balanced preference systems using the rotation representation of Aprile, Cevallos, and Faenza~\cite{AprileCevallosFaenza2018}. We then extend the result to general bipartite preference systems.

\subsection{The complete balanced case}\label{sec:complete-case}
Let $(G,\mathcal O)$ be a bipartite preference system with $G=(U,V;E)$ complete and $|U|=|V|$.

We use the standard rotation poset $(\mathcal R,\preceq)$; see~\cite{AprileCevallosFaenza2018,GusfieldIrving1989}. Each rotation $\rho$ has a set $\rho^-$ of discarded edges and a set $\rho^+$ of adopted edges. Let  $B$ the matrix whose columns are the signed incidence vectors of the rotations,    that is $B=[\chi^{\rho^+}-\chi^{\rho^-}:\rho\in\mathcal R\,]$.

We will use the following standard rotation property~\cite{GusfieldIrving1989}:
\begin{equation}\label{eq:rotation-uniqueness}
\text{Each edge is adopted by at most one rotation and discarded by at most one rotation.}
\end{equation}

Write $\rho\lessdot\sigma$ when $\sigma$ covers $\rho$, that is, $\rho\prec\sigma$ and there is no $\tau$ with $\rho\prec\tau\prec\sigma$. In the convention corresponding to ideals, the \emph{order polytope} is~\cite{Stanley1986} $Q_{\mathcal R} =\{z\in[0,1]^{\mathcal R}:z_\rho\ge z_\sigma$  whenever $\rho\lessdot\sigma\}.$
Let $M_U$ denote the $U$-optimal stable matching. Aprile, Cevallos, and Faenza~\cite{AprileCevallosFaenza2018} proved that $B$ has linearly independent columns and that
\[
 \pi:Q_{\mathcal R}\longrightarrow P_{G\mathcal{O}},
 \qquad \pi(z)=\chi^{M_U}+Bz
\]
is an affine isomorphism.

Let $Q'$ be a nonempty face of $Q_{\mathcal R}$. Define an undirected graph $H(Q')$ on $\mathcal R\cup\{\mathbf0,\mathbf1\}$ by joining
\begin{itemize}
\item $\rho$ to $\mathbf0$ if $z_\rho=0$ for every $z\in Q'$;
 \item $\rho$ to $\mathbf1$ if $z_\rho=1$ for every $z\in Q'$;
 \item $\rho$ to $\sigma$ if $\rho\lessdot\sigma$ and $z_\rho=z_\sigma$ for every $z\in Q'$.
\end{itemize}
The two additional vertices lie in distinct components, since $Q'$ is nonempty. Let $C_1,\ldots,C_t$ be the components containing neither of them. We call these the \emph{free rotation blocks} of $Q'$.

\begin{lemma}\label{lem:free-block-basis}
The vectors $\chi^{C_1},\ldots,\chi^{C_t}$ form a basis of $\operatorname{lin}(Q')$.
\end{lemma}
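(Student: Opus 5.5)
We will show two inclusions and then linear independence. Let $L$ denote the linear span of $\chi^{C_1},\ldots,\chi^{C_t}$. The vectors $\chi^{C_i}$ have pairwise disjoint nonempty supports, so they are linearly independent. Hence it suffices to prove $\operatorname{lin}(Q')\subseteq L$ and $L\subseteq\operatorname{lin}(Q')$.

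\emph{Step 1: $\operatorname{lin}(Q')\subseteq L$.} The space $\operatorname{lin}(Q')$ is spanned by differences $z-z'$ with $z,z'\in Q'$. Fix such $z,z'$ and set $d=z-z'$.
\begin{itemize}
\item If $\rho\sigma$ is an edge of $H(Q')$ between two rotations, then $z_\rho=z_\sigma$ and $z'_\rho=z'_\sigma$, so $d_\rho=d_\sigma$.
\item If $\rho$ is joined to $\mathbf0$ or $\mathbf1$, then $z_\rho=z'_\rho$, so $d_\rho=0$.
\end{itemize}
Propagating along paths, $d$ is constant on each component of $H(Q')$ and vanishes on the components containing $\mathbf0$ or $\mathbf1$. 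Therefore $d=\sum_i d_{C_i}\chi^{C_i}\in L$.

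\emph{Step 2: $L\subseteq\operatorname{lin}(Q')$.} This is the main step. We use that $Q'$ is a face of the polytope $Q_{\mathcal R}$, described by the inequalities $0\le z_\rho\le1$ and $z_\rho\ge z_\sigma$ for $\rho\lessdot\sigma$.
\begin{itemize}
\item Choose $\bar z$ in the relative interior of $Q'$. Then $\bar z$ satisfies with equality exactly those defining inequalities that are tight on all of $Q'$, and satisfies every other one strictly.
\item By definition of $H(Q')$, the inequalities tight on all of $Q'$ correspond exactly to the edges of $H(Q')$.
\item Fix a free block $C_i$ and consider $\bar z+\varepsilon\chi^{C_i}$ for $|\varepsilon|$ small. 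Every inequality that is tight at $\bar z$ stays tight:
\begin{itemize}
\item a bound $z_\rho\in\{0,1\}$ that is tight involves a rotation $\rho$ adjacent to $\mathbf0$ or $\mathbf1$, hence $\rho\notin C_i$, so $z_\rho$ is unchanged;
\item a tight covering inequality $z_\rho\ge z_\sigma$ has $\rho,\sigma$ in the same component, so either both coordinates shift by $\varepsilon$ or neither does.
\end{itemize}
\item The strict inequalities remain strict for $|\varepsilon|$ small.
\end{itemize}
Thus $\bar z+\varepsilon\chi^{C_i}$ lies in $Q_{\mathcal R}$ and satisfies every equality defining $Q'$, so it lies in $Q'$. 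Hence $\chi^{C_i}\in\operatorname{lin}(Q')$.

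The one point needing care is that the equality set of $Q'$ is exactly the set of inequalities of the stated system tight on all of $Q'$. Faces of a polytope given by an inequality system are obtained by setting some subset of the inequalities to equality, so $Q'=\{z\in Q_{\mathcal R}: \text{the tight inequalities hold with equality}\}$, and the relative-interior argument above applies. Combining Steps 1 and 2 with linear independence gives that $\chi^{C_1},\ldots,\chi^{C_t}$ is a basis of $\operatorname{lin}(Q')$.
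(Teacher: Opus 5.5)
Your proof is correct and follows the paper's approach. Both arguments identify $\operatorname{lin}(Q')$ with the solution space of the homogeneous versions of the constraints tight on $Q'$, which forces vanishing on the components of $\mathbf0$ and $\mathbf1$ and constancy on each free block; the paper simply takes this characterization as standard, while your Steps 1 and 2 (the difference argument and the relative-interior perturbation) prove its two inclusions explicitly.
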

\begin{proof}
A vector $h\in\mathbb R^{\mathcal R}$ belongs to $\operatorname{lin}(Q')$ if and only if it satisfies the homogeneous equalities corresponding to the constraints tight in $Q'$. These require $h$ to vanish on components containing $\mathbf0$ or $\mathbf1$ and to be constant on each free block. Hence
\[
 \operatorname{lin}(Q')
 =\operatorname{span}\{\chi^{C_1},\ldots,\chi^{C_t}\}.
\]
The vectors are linearly independent because their supports are nonempty and pairwise disjoint.
\end{proof}

\begin{theorem}\label{thm:PboxTDI}
The stable marriage polytope of a complete balanced bipartite preference system is box-TDI.
\end{theorem}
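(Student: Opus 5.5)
We will apply Theorem~\ref{thm:boxTDIiffTU} to $P_{G\mathcal O}$. Since $\pi$ is an affine isomorphism, the nonempty faces of $P_{G\mathcal O}$ are exactly the images $F=\pi(Q')$ of the nonempty faces $Q'$ of $Q_{\mathcal R}$. Because $\pi(z)=\chi^{M_U}+Bz$, we have $\operatorname{lin}(F)=B\,\operatorname{lin}(Q')$. Lemma~\ref{lem:free-block-basis} then gives a basis of $\operatorname{lin}(F)$: the vectors $B\chi^{C_1},\ldots,B\chi^{C_t}$. They are linearly independent because $B$ has linearly independent columns, so injectivity of $B$ preserves independence. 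Let $N$ be the $E\times t$ matrix with columns $B\chi^{C_j}=\sum_{\rho\in C_j}(\chi^{\rho^+}-\chi^{\rho^-})$. It remains to show that $N$ is totally unimodular.

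We first check that $N$ is a $\{0,\pm1\}$-matrix with at most one $+1$ and at most one $-1$ per row, so that Proposition~\ref{prp:TUsuff} applies. Fix an edge $e$. By~\eqref{eq:rotation-uniqueness}, $e$ lies in $\rho^+$ for at most one rotation $\rho_+(e)$ and in $\sigma^-$ for at most one rotation $\rho_-(e)$. Since the blocks $C_j$ are pairwise disjoint, the row of $N$ indexed by $e$ has a $+1$ contribution in at most one column, namely the block containing $\rho_+(e)$, if that rotation is free. Likewise it has a $-1$ contribution in at most one column.

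The one delicate case is when both rotations lie in the same block $C_j$. Then the two contributions cancel, and the entry is $0$. The entry can never be $\pm2$, since each sign appears at most once, and the cancellation leaves the entry $0$. So every entry is in $\{0,\pm1\}$. Each row contains at most one $+1$, in the block of $\rho_+(e)$, and at most one $-1$, in the block of $\rho_-(e)$.

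The only step needing care is to make sure no edge is both adopted and discarded by the same rotation. If that happened, the column of that rotation in $B$ would have a zero in that row, and the argument would still hold. So~\eqref{eq:rotation-uniqueness} is all that is used, and there is no real obstacle.

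Proposition~\ref{prp:TUsuff} therefore shows that $N$ is totally unimodular. Its columns form a basis of $\operatorname{lin}(F)$ for every nonempty face $F$, so Theorem~\ref{thm:boxTDIiffTU} yields that $P_{G\mathcal O}$ is box-TDI.
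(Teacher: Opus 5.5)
Your proof is correct and takes the same approach as the paper. You pull each face back to a face of the order polytope and push the free-block basis of Lemma~\ref{lem:free-block-basis} forward through the injective matrix $B$; then~\eqref{eq:rotation-uniqueness} and the disjointness of the blocks give rows with at most one $+1$ and one $-1$, and Theorem~\ref{thm:boxTDIiffTU} finishes. Your explicit handling of the case where $\rho_+(e)$ and $\rho_-(e)$ lie in the same free block, where the two contributions cancel to a $0$ entry, spells out a step the paper leaves implicit.
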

\begin{proof}
Let $F$ be a nonempty face of $P_{G\mathcal{O}}$. Since $\pi$ is an affine isomorphism, there exists a unique face $Q'$ of $Q_{\mathcal R}$ such that $F=\pi(Q')$, and we have that $\operatorname{lin}(F)=B\operatorname{lin}(Q')$.
Let $C_1,\dots,C_t$ be the free rotation blocks of $Q'$, and set $Z_F = B[\,\chi^{C_1}\ \cdots\ \chi^{C_t}\,]$.

By Lemma~\ref{lem:free-block-basis}, the columns of $Z_F$ span $\operatorname{lin}(F)$. They are linearly independent because $B$ has linearly independent columns. By~\eqref{eq:rotation-uniqueness}, each row of $B$ has at most one $+1$ and at most one $-1$. Since the blocks $C_1,\dots,C_t$ are disjoint, the same holds for each row of $Z_F$. Thus $Z_F$ is totally unimodular by Proposition~\ref{prp:TUsuff}. The conclusion follows from Theorem~\ref{thm:boxTDIiffTU}.
\end{proof}

\subsection{The general case}
Aprile, Cevallos, and Faenza's results are given for complete balanced bipartite preference systems, but can immediately be extended to the general bipartite case, therefore Theorem \ref{thm:PboxTDI} has full generality.
In this section, we prefer to give an alternative route for the general case, that exploits the polyhedral properties of box-TDIness.

\begin{corollary}\label{cor:general}
The stable marriage polytope of a bipartite preference system is box-TDI.
\end{corollary}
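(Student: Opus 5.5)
The plan is to reduce a general bipartite preference system $(G,\mathcal O)$ to a complete balanced one, and then recover $P_{G\mathcal O}$ as a coordinate projection of a face of the complete balanced polytope. Box-TDIness then follows from Theorem~\ref{thm:PboxTDI} and Proposition~\ref{prp:projections}.

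\textbf{Construction.} Start from $G=(U,V;E)$. Add dummy vertices to the smaller side so that $|U|=|V|$, and add all missing edges, obtaining a complete balanced graph $\hat G=(U,V;\hat E)$ with $E\subseteq\hat E$. Extend each order $<_v$ to $\delta_{\hat G}(v)$ by placing every new edge after all original edges at $v$, in any order among the new edges.

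\textbf{Correspondence of stable matchings.} I would show that every stable matching $\hat M$ of $(\hat G,\hat{\mathcal O})$ satisfies the following: $\hat M\cap E$ is stable in $(G,\mathcal O)$, and the original vertices it leaves exposed are exactly the exposed ones. Conversely, every stable $M$ of $G$ extends to a stable $\hat M$ by completing it on the vertices $M$ leaves unmatched using new edges.

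\begin{itemize}
\item If $e\in E$ is not dominated by $\hat M\cap E$ at either endpoint, then each endpoint is either unmatched in $\hat M\cap E$ or matched through a new edge, which it ranks below $e$. Then $e$ is not dominated in $\hat M$, which is a contradiction.
\item Conversely, given $M$ stable in $G$, the set $W$ of vertices it leaves exposed is the same for all stable matchings. Complete $M$ on $W$ (plus the dummies) by any stable matching of the complete preference system restricted to the new edges. An original edge $e$ is dominated by $M$ in $G$, hence in $\hat G$, since the ranks of original edges are unchanged. A new edge $f=uv$ has each endpoint either matched by an original edge it prefers to $f$, or lying in $W$; so $f$ is dominated by $M$ or by the stable completion.
\end{itemize}

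It follows that $P_{G\mathcal O}$ is the projection onto $\mathbb R^E$ of $\hat P=P_{\hat G\hat{\mathcal O}}$. Strictly, the projection of $\{\chi^{\hat M}\}$ is exactly $\{\chi^{M}\}$, so the convex hulls coincide because projection is linear.

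\textbf{Conclusion.} $\hat P$ is box-TDI by Theorem~\ref{thm:PboxTDI}, and therefore its coordinate projection $P_{G\mathcal O}$ is box-TDI by Proposition~\ref{prp:projections}. Since Rothblum's system~\eqref{sys:rothblum} is TDI by Király and Pap, Cook's result then gives that~\eqref{sys:rothblum} itself is box-TDI.

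\textbf{Main obstacle.} The delicate step is the converse direction, extending a stable $M$ of $G$ to a stable matching of $\hat G$. It depends on the new edges among $W$ and the dummies forming a complete bipartite preference system with equal side sizes. For this I need $|W\cap U|+d_U=|W\cap V|+d_V$, where $d_U,d_V$ count the dummies added to each side. This holds because $M$ is a perfect matching between the covered original vertices on the two sides, so both totals equal $|U\cup\text{dummies}|-|M|$. The existence of the completion then follows from Gale--Shapley. If this step were problematic, I would instead use faces: fix the new edges with $x_f\ge0$ and the degree constraints, and argue via a face $\{x: x(\delta(v))=1\}$. The projection argument above should suffice, however.
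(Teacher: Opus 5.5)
Your proof is correct and follows the same route as the paper. Both embed $(G,\mathcal O)$ into a complete balanced system with all new edges ranked below the original ones, show that restriction to $E$ maps the stable matchings of the enlarged system onto those of $(G,\mathcal O)$, and conclude via Theorem~\ref{thm:PboxTDI} and Proposition~\ref{prp:projections}.

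The only difference is the padding gadget. The paper gives each exposed vertex $v$ a private partner $\bar v$, with $v\bar v$ ranked first among new edges at both endpoints, so the extension $M\cup\{v\bar v\}$ is explicit. You instead balance with arbitrary dummies and complete $M$ by a Gale--Shapley stable matching on the exposed vertices and the dummies. That step silently uses a fact you should state: no original edge joins two $M$-exposed vertices, since such an edge would be undominated, contradicting stability of $M$.
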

\begin{proof}
Let $(G,\mathcal O)$ be a bipartite preference system with $G=(U,V;E)$. All its stable matchings cover the same set of vertices. For each vertex $v$ unmatched in these matchings, add a vertex $\bar v$ in the opposite part. Complete the enlarged graph to a complete bipartite graph $\bar G$. Preserve the original orders and rank every new edge below every original edge at its endpoints. At each originally unmatched $v$, rank $v\bar v$ above the other new edges; at $\bar v$, rank it first. Order the remaining new edges arbitrarily.
By construction, $(\bar G,\bar{\mathcal O})$ is complete and balanced.

Every original stable matching $M$ extends to a stable matching $\bar M$ by adding all the edges $v\bar v$. Indeed, an original edge outside $M$ is dominated by an edge of $M$, and every new edge outside $\bar M$ is less preferred than the assigned edge at its endpoints. Conversely, restricting any stable matching of $(\bar G,\bar{\mathcal O})$ to $E$ gives an original stable matching.

These two statements imply that $P_{G\mathcal O}$ is the projection of  $P_{\bar G\bar{\mathcal O}}$ onto the edges of $G$.
This latter polytope is box-TDI by Theorem~\ref{thm:PboxTDI}, so Proposition~\ref{prp:projections} proves the claim.
\end{proof}

Combining this result with that of Király and Pap~\cite{KiralyPap2008} we obtain:
\begin{corollary}
Rothblum's description~\eqref{sys:rothblum} of the stable marriage polytope is box-TDI.
\end{corollary}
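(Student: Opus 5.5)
The plan is to combine Corollary~\ref{cor:general} with the theorem of Király and Pap~\cite{KiralyPap2008} through Cook's result, recalled in the preliminaries: every TDI system that describes a box-TDI polyhedron is itself box-TDI.

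\emph{Step 1: the system describes the polytope.} By Rothblum's theorem~\cite{Rothblum1992}, system~\eqref{sys:rothblum} describes $P_{G\mathcal O}$ for every finite bipartite preference system $(G,\mathcal O)$. It is also rational, with integer coefficients and right-hand sides.

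\emph{Step 2: the polytope is box-TDI.} By Corollary~\ref{cor:general}, $P_{G\mathcal O}$ is a box-TDI polyhedron. Equivalently, it admits some box-TDI description, though not necessarily~\eqref{sys:rothblum} a priori.

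\emph{Step 3: the system is TDI.} Király and Pap~\cite{KiralyPap2008} proved that system~\eqref{sys:rothblum} is TDI.

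\emph{Step 4: conclude.} Cook's theorem~\cite{Cook1986} now applies to the TDI system~\eqref{sys:rothblum} describing the box-TDI polyhedron $P_{G\mathcal O}$. It yields that~\eqref{sys:rothblum} is box-TDI, which is the claim.

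The only point requiring care is that the hypotheses of Cook's theorem match exactly. The system must be TDI in the same sense used in this paper, and it must describe the same polyhedron as the one shown to be box-TDI, not merely its integer hull. Both hold: the Király--Pap statement concerns exactly Rothblum's inequalities, including the nonnegativity constraints, and Rothblum's theorem gives equality with $P_{G\mathcal O}$. Beyond checking these hypotheses, I expect no real obstacle.
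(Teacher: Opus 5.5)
Your proposal is correct and matches the paper's argument. The paper obtains this corollary the same way, by combining Corollary~\ref{cor:general} (the polytope is box-TDI) with Király and Pap's theorem that Rothblum's system is TDI, through Cook's result that every TDI system describing a box-TDI polyhedron is box-TDI.
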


\section{Concluding remarks}
We proved that the stable marriage polytope is box-TDI.
This does not fully characterize the class of graphs for which the stable matching polytope is box-TDI. For instance, the existence of at most one stable matching for $C_3$ and $C_5$ implies that, for any preference system on these graphs, the corresponding polytope is trivially box-TDI. On the other hand, direct computation on small instances like $K_6$ yields both preference systems whose stable matching polytope is box-TDI and preference systems whose stable matching polytope is not box-TDI. This leaves open the question of characterizing which preference systems give a box-TDI stable matching polytope for a given graph.

\subsection*{Acknowledgment of AI usage}
 The mathematical results and proofs in this paper were obtained independently by the authors, without the use of AI tools. Generative AI tools were subsequently used to assist with the identification of relevant prior literature, editorial feedback on the presentation of the results, and language editing. All mathematical claims, citations, and references were independently checked by the authors, who take full responsibility for the content of the paper.

\bibliographystyle{plain}
\bibliography{references}
\end{document}